\def\R{\mathbb R}
\def\T{\mathcal T}
\def\N{\mathbb N}
\def\C{\mathbb C}
\def\e{\varepsilon}
\newtheorem{theorem}{Theorem}
\newtheorem{proposition}{Proposition}
\newtheorem{corollary}{Corollary}
\newtheorem{lemma}{Lemma}
\begin{document}
\title{Averages on annuli of Eulidean space}
\author{Fran\c{c}ois Havard}
\author{Emmanuel Lesigne}
\address[Emmanuel Lesigne et Fran\c{c}ois Havard]{Laboratoire de Math\'ematiques et Physique Th\'eorique (UMR CNRS 6083)\\
F\'ed\'eration de Recherche Denis Poisson\\ Universit\'e
Fran\c{c}ois Rabelais\\ Parc de Grandmont, 37200 Tours, France}
\email{emmanuel.lesigne@lmpt.univ-tours.fr}

\begin{abstract} We study the range of validity of differentiation theorems and ergodic theorems for $\R^d$ actions, for averages on ``thick spheres'' of Euclidean space.
\end{abstract}
\maketitle
\centerline{\today}

\setcounter{tocdepth}{1}
\tableofcontents
  \maketitle
  \tableofcontents
  
\section*{Introduction}
The classical Lebesgue differentiation Theorem states that, for any locally integrable function $f$ on the $d$-dimensional Euclidean space $\R^d$,
$$
\lim_{r\to0}\frac1{|B_r|}\int_{B_r}f(x+t)\,\text{d}t=f(x)\quad\text{for almost all $x$}.
$$
Here $B_r$ denotes the ball centered at the origin and of radius $r>0$, and $|A|$ denotes the Euclidean volume (i.e. the Lebesgue measure) of the set $A$. The reference measure on $\R^d$ is Lebesgue measure.

The main fact which ensures this differentiation theorem is the Hardy-Littlewood maximal inequality, which can be written in the following form: for any non-negative integrable function $f$ on $\R^d$, 
$$
\left|\left\{x\in\R^d\mid\sup_{r>0}\frac1{|B_r|}\int_{B_r}f(x+t)\,\text{d}t>1\right\}\right|\leq C(d)\int_{\R^d}f(x)\,\text{d}x,
$$
where $C(d)$ is a constant depending only on the dimension $d$.

It is known since the works of Wiener that the maximal inequality is also the cornerstone of the following ergodic theorem. \\Let $(\Omega,\T,\mu)$ be a probability space and $(T_t)_{t\in\R^d}$ be a measurable and measure preserving action of the group $\R^d$ on this space. Then, for all integrable function $\phi$ on $(\Omega,\T,\mu)$,
$$
\lim_{r\to+\infty}\frac1{|B_r|}\int_{B_r}\phi(T_t\omega)\,\text{d}t\quad \text{exists for $\mu$-almost all $\omega$}.
$$
Moreover if the action is ergodic, this limit is $\displaystyle\int_\Omega\phi(\omega)\,\text{d}\mu(\omega).$
\\(Basic definitions are recalled at the end of Introduction)

These classical results have been extended in many different directions, and the literature on differentiation and ergodic theorems is extremely wide (among general reviews see \cite{St2}, \cite{Krengel}, \cite{etga} ...). Variations can be made on the type of averaging region, the type of acting group, the type of averages and the function spaces under consideration.

In the present article, we study averages on domains of the Euclidean space with spherical symmetry, more precisely annuli
$$
C_{r,e}:=\{x\in\R^d\mid r-e\leq\|x\|\leq r\},
$$
where $\|\cdot\|$ denotes the standard Euclidean norm and $r$, $e$ are positive real numbers with $e\leq r$.

We will consider annuli for which the thickness $e$ is a function of the radius $r$. Thus we assume, we are given a function $r\mapsto e(r)$ from $(0,+\infty)$ into itself, with $e(r)\leq r$. And we define the following averaging operators.

If $f$ is a locally integrable function on $\R^d$,
$$
M_rf(x):=\frac1{\left|C_{r,e(r)}\right|}\int_{C_{r,e(r)}}f(x+t)\,\text{d}t.
$$

If $\left(\Omega,\T,\mu,(T_t)_{t\in\R^d}\right)$ is a measurable and measure preserving action of the group $\R^d$ on a measure space, and if $\phi\in L^1(\mu)$,
$$
A_r\phi(\omega):=\frac1{\left|C_{r,e(r)}\right|}\int_{C_{r,e(r)}}\phi(T_t\omega)\,\text{d}t.
$$

It is not difficult to verify that the family $(A_r\phi)_{r>0}$ is $\mu$-almost everywhere well defined (see Proposition \ref{def}). Considering $\R^d$ acting on itself by translation, we have in particular that the family $(M_rf)_{r>0}$ is almost everywhere well defined.

We are interested in the validity of a differentiation theorem, describing the behaviour of $M_rf$ when $r$ goes to zero, and of an ergodic theorem, describing the behaviour of $A_r\phi$ when $r$ goes to infinity. We know that a maximal inequality is the cornerstone of each of these results, when one looks for pointwise convergence.

Not surprisingly, the case of dimension $d=1$ differs significantly from the case of higher dimensions. In the present article we will be mostly interested in the case of dimension $d\geq2$. In this case, our study is related to the study of averages on spheres. Although less classical and more difficult to study than averages on balls, the averages on spheres have been thoroughly studied by Stein (\cite{St1}, differentiation theorem, $d\geq3$), Bourgain (\cite{B}, differentiation theorem, $d=2$), Jones (\cite{J}, ergodic theorem, $d\geq3$) and Lacey (\cite{L}, ergodic theorem, $d=2$). (A survey and numerous references can be found in \cite{etga}.) These results are described in Section \ref{sphere}. In particular, we recall that the maximal inequality for spherical averages is valid under the $L^p$-integrability condition, with $p>\frac{d}{d-1}$. 

In Section \ref{facile}, we describe the first basic results concerning averages on annuli: almost everywhere existence of averages, and their convergence in the mean.

Section \ref{principal} contains the most original part of our study. We describe the range of $L^p$ spaces in which a maximal inequality is satisfied by the families of averages $(M_rf)$ and $(A_r\phi)$. The reader could expect that this range varies ``continuously'' with the choice of the thickness function $e$, from the condition $p\geq1$ corresponding to the case of balls, toward the condition $p>\frac{d}{d-1}$ corresponding to the case of spheres. This is not true, and we establish a dichotomy theorem: under some mild natural condition of regularity on the function $e$, either the averages $M_r$ and $A_r$ behave like averages on balls, or they behave like averages on spheres. As soon as the thickness of the annuli is asymptotically negligible with respect to the radius, there is no $L^{d/(d-1)}$ maximal inequality.

In Section \ref{dimun}, we present quickly what happens in dimension 1.

\medbreak
We thank Anthony Quas for helpful discussions, and the suggestion of a rescaling argument which plays a crucial role in the negation of the weak-$L^{d/(d-1)}$ maximal inequality.

\medbreak
For the reader who is not familiar with ergodic theory, we recall some basic definitions. \\
A {\it measurable and measure preserving action} of $\R^d$ on a measure space $(\Omega,\T,\mu)$ is given by a family $(T_t)_{t\in\R^d}$ of maps from $\Omega$ into itself such that \begin{itemize}\item The map $\R^d\times\Omega\to\Omega,\ (t,\omega)\mapsto T_t(\omega)$ is measurable;\item For all $t\in\R^d$ and all $A\in\T$, $\mu\left(T^{-1}(A)\right)=\mu(A)$;\item $T_0=\text{Id}_{\Omega}$ and, for all $t,s\in\R^d$, $T_{t+s}=T_t\circ T_s$.\end{itemize}
Moreover this action is {\it ergodic} if for $A\in\T$, 
$$
\left[\forall t\in\R^d,\ T_t(A)=A\right]\ \Longrightarrow\ \mu(A)=0 \text{ or }\mu(\Omega\setminus A)=0.
$$

\section{Averages on spheres}\label{sphere}
This section does not contain any original results, but the facts that we recall are necessary for the understanding of the remainder.

Let us denote by $\sigma_r$ the uniform probability on the sphere $S_r$ centered at the origin and of radius $r$, in the standard Euclidean space $\R^d$. If $f$ is a continuous function from $\R^d$ into $\C$, we denote by $M_r^{S}f(x)$ its mean value on the sphere centered at $x$ and of radius $r$.
$$
M_r^{S}f(x)=\int_{S_r}f(x+t)\,\text{d}\sigma_r(t)=\int_{S_1}f(x+rt)\,\text{d}\sigma_1(t).
$$
The following strong maximal inequality is due to Stein for $d\geq3$ and to Bourgain for $d=2$.
\begin{theorem} Let $d$ be an integer $\geq2$ and $p$ be a real number $>\frac{d}{d-1}$. There exists a positive constant $C(d,p)$ such that, for all continuous function $f$ on $\R^d$, with compact support,
\begin{equation}\label{max-in-s}
\int_{\R^d}\sup_{r>0}\left|M_r^Sf(x)\right|^p\,\text{d}x\leq C(d,p)\int_{\R^d}\left|f(x)\right|^p\,\text{d}x.
\end{equation}
\end{theorem}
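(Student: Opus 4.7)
My plan is to follow the classical scheme: Stein's Fourier-analytic interpolation argument for $d \geq 3$, and to invoke Bourgain's deeper geometric-combinatorial argument for $d = 2$. The common starting point in both cases is the stationary-phase bound $|\widehat{\sigma_1}(\xi)| \leq C(1+|\xi|)^{-(d-1)/2}$ on the Fourier transform of the uniform probability on the unit sphere.

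For $d \geq 3$, I would first reduce to a single dyadic scale $r \in [1,2]$ by combining the scaling identity $\widehat{\sigma_r}(\xi) = \widehat{\sigma_1}(r\xi)$ with a Littlewood--Paley decomposition of $f$, summing the resulting frequency pieces via Plancherel and Littlewood--Paley theory. For the bounded-scale problem I would introduce an analytic family of operators $T^\alpha_r$ obtained from $M^S_r$ by multiplying the Fourier side by a smoothing factor depending holomorphically on $\alpha \in \C$, for instance $(1+|\xi|^2)^{-\alpha/2}$, so that $T^0_r = M^S_r$ while $T^\alpha_r$ becomes trivially bounded on $L^p$ for $\mathrm{Re}\,\alpha$ sufficiently large. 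The core of the argument is an $L^2$ bound for $\sup_{r \in [1,2]} |T^\alpha_r f|$ in the regime where $\mathrm{Re}\,\alpha$ is only slightly less than $(d-1)/2$; this is obtained from a Littlewood--Paley $g$-function estimate controlling $\int_1^2 |\partial_r T^\alpha_r f(x)|^2 \, dr$ via Plancherel and the decay of $\partial_r \widehat{\sigma_r}$, combined with the elementary inequality $\sup_r |F(r)|^2 \leq |F(1)|^2 + 2 \int |F||F'|$. Stein's complex-analytic interpolation theorem for analytic families of operators then yields the desired $L^p$ bound at $\alpha = 0$ for every $p > d/(d-1)$.

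The case $d = 2$ does not yield to this scheme, since the Fourier decay of $\sigma_1$ is only of order $|\xi|^{-1/2}$, leaving no room for the interpolation argument to improve on the trivial $L^2$ bound. Here I would invoke Bourgain's original proof, which obtains the maximal inequality for $p > 2$ by combining a frequency-localized estimate with a delicate geometric analysis of configurations of intersecting circles, exploiting near-orthogonality of annular convolutions at distant radii and a restricted weak-type argument at $p = 2$.

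The main obstacle, by far, is this two-dimensional case: in dimension $\geq 3$ the analytic interpolation produces the sharp range essentially automatically from the stationary-phase bound, whereas in dimension $2$ the sharp exponent $p > 2$ is reached only by Bourgain's considerably subtler geometric argument. A secondary technical point, needed uniformly in $d$, is the square-function estimate controlling the derivative in $r$ of the smoothed spherical averages, which is what trades one derivative of regularity in $r$ for the passage from a single-$r$ estimate to the full supremum.
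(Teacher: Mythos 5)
The paper does not prove this theorem at all: it is quoted as the known Stein--Bourgain spherical maximal theorem, with the proof deferred to \cite{St1}, \cite{B}, \cite{St2}, \cite{etga}. Your outline (Stein's square-function and analytic-interpolation argument from the decay $|\widehat{\sigma_1}(\xi)|\lesssim(1+|\xi|)^{-(d-1)/2}$ for $d\geq3$, and an appeal to Bourgain's geometric argument for $d=2$) is a faithful sketch of exactly those cited proofs, so it is consistent with the paper's treatment.
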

For this theorem, we refer to original articles \cite{St1}, \cite{B} or textbooks \cite{St2}, \cite{etga}. It is not very difficult to see that the lower bound $d/(d-1)$ is optimal in this statement. 

Thanks to this maximal inequality, it makes sense to consider the maximal function $\sup_{r>0}\left|M_r^Sf\right|$ for any $f\in L^p(\R^d)$ with $p>\frac{d}{d-1}$, and the maximal inequality (\ref{max-in-s}) remains valid for all $f\in L^p(\R^d)$.

As a direct consequence, we have the following differentiation theorem (which is evident for continuous functions, and which extends by density thanks to the maximal inequality).

\begin{corollary} Let $d\geq2$ and $p>\frac{d}{d-1}$. For all $f\in L^p_{\rm loc}(\R^d)$, for almost all $x\in \R^d$,
$$
\lim_{r\to0}M_r^Sf(x)=f(x).
$$
\end{corollary}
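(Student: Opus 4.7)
The plan is to run the standard density argument: one establishes the conclusion first on a dense subclass where pointwise convergence is trivial, then bootstraps to all of $L^p_{\text{loc}}$ using the maximal inequality \eqref{max-in-s} to control the bad set.

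First I would reduce to $L^p(\R^d)$ with compact support. Since the conclusion is local, fix a bounded open set $U$ and a slightly larger open set $V \Supset U$. If $r_0 > 0$ is small enough that $x + rt \in V$ for every $x \in U$, $r \in (0, r_0]$, $t \in S_1$, then $M_r^S f(x)$ for $x \in U$ and $r < r_0$ depends only on $f \mathbf 1_V$, which belongs to $L^p(\R^d)$. So without loss of generality assume $f \in L^p(\R^d)$.

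Next I would handle the dense subclass. For $g \in C_c(\R^d)$, the formula $M_r^S g(x) = \int_{S_1} g(x + rt)\,\text{d}\sigma_1(t)$ together with uniform continuity of $g$ shows that $M_r^S g(x) \to g(x)$ uniformly in $x$ as $r \to 0$; in particular the conclusion holds pointwise everywhere.

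Then comes the density step. Fix $\lambda > 0$ and $\e > 0$, and choose $g \in C_c(\R^d)$ with $\|f - g\|_p < \e$. Writing $h := f - g$, the triangle inequality gives
$$
\Bigl\{x : \limsup_{r\to0}\bigl|M_r^S f(x) - f(x)\bigr| > \lambda\Bigr\} \subset \Bigl\{x : \sup_{r>0}|M_r^S h(x)| > \lambda/2\Bigr\} \cup \Bigl\{x : |h(x)| > \lambda/2\Bigr\},
$$
since the contribution of $g$ vanishes in the $\limsup$. The maximal inequality \eqref{max-in-s} bounds the Lebesgue measure of the first set by $C(d,p)(2/\lambda)^p \|h\|_p^p \leq C(d,p)(2\e/\lambda)^p$, and Chebyshev handles the second similarly. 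Letting $\e \to 0$ shows that the $\limsup$-set has measure zero for every $\lambda > 0$, which gives the almost everywhere limit.

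There is no real obstacle here; the only point requiring a line of care is the reduction from $L^p_{\text{loc}}$ to $L^p$ via a cutoff, to legitimately invoke the global maximal inequality. Everything else is the textbook two-epsilon split that upgrades a convergence result on a dense subclass to an a.e. convergence result once a weak-type or strong-type maximal estimate is available.
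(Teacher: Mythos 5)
Your proof is correct and follows exactly the route the paper indicates: the paper dispatches this corollary in one line, noting that the convergence is evident for continuous functions and extends by density thanks to the maximal inequality (\ref{max-in-s}), which is precisely your two-epsilon argument, supplemented by the routine localization from $L^p_{\rm loc}$ to $L^p$.
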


The Calder\`on transference principle (see for example \cite{etga}, Section 2.3) allows us to transfer the maximal inequality (\ref{max-in-s}) to the general context of a measure preserving $\R^d$-action.

Let $(\Omega,\T,\mu)$ be a finite or $\sigma$-finite measure space and $(T_t)_{t\in\R^d}$ be a measurable and measure preserving action of the group $\R^d$ on this space. If $\phi$ is an integrable function on $\Omega$, we denote
$$
A_r^S\phi(\omega):=\int_{S_r}\phi(T_t\omega)\,\text{d}\sigma_r(t).
$$
After transfer, the maximal inequality (\ref{max-in-s}) takes the following form: let $d\geq2$ and $p>\frac{d}{d-1}$; if $\phi\in L^p(\mu)$, the family $\left(A_r^S\phi\right)_{r>0}$ is $\mu$-almost everywhere well defined and
$$
\int_{\Omega}\sup_{r>0}\left|A_r^S\phi(\omega)\right|^p\,\text{d}\mu(\omega)\leq C(d,p)\int_\Omega\left|\phi(\omega)\right|^p\,\text{d}\mu(\omega).
$$

The pointwise ergodic theorem for spherical averages is the following result. Here we suppose that $\mu$ is a probability measure.
\begin{theorem}
Let $d\geq2$, $p>\frac{d}{d-1}$, and $\phi\in L^p(\mu)$. For $\mu$-almost every $\omega$,
$$
\lim_{r\to+\infty}A_r^S\phi(\omega)\ \ \text{exists}.
$$
Moreover, if the $\R^d$-action on $(\Omega,\T,\mu)$ is ergodic, then this limit is $\displaystyle\int_\Omega\phi\,\rm{d}\mu$.
\end{theorem}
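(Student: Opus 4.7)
The strategy is the classical one for pointwise ergodic theorems: combine the transferred strong maximal inequality with a mean--ergodic/spectral analysis on $L^2(\mu)$, then lift to $L^p(\mu)$ by density.

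First, the transferred maximal inequality $\|\sup_{r>0}|A_r^S\phi|\|_{L^p(\mu)}\leq C(d,p)\|\phi\|_{L^p(\mu)}$ recalled above, together with the Banach principle, shows that
$$
\mathcal C:=\left\{\phi\in L^p(\mu):\lim_{r\to+\infty}A_r^S\phi(\omega)\text{ exists for $\mu$-a.e.\ }\omega\right\}
$$
is closed in $L^p(\mu)$. It therefore suffices to exhibit a dense subclass of $L^p(\mu)$ on which the convergence holds, and to identify the limit there.

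Second, I would pass through $L^2(\mu)$. The action $(T_t)_{t\in\R^d}$ induces a strongly continuous unitary representation $U_t\phi:=\phi\circ T_t$, with projection-valued measure $E$ on $\R^d$ satisfying $U_t=\int_{\R^d}e^{2\pi i\langle t,\xi\rangle}\,dE(\xi)$. In this spectral picture $A_r^S$ corresponds to multiplication by the Fourier transform $\widehat\sigma_r(\xi)$ of the uniform measure on $S_r$, which satisfies the classical Bessel decay $|\widehat\sigma_r(\xi)|\lesssim (1+r\|\xi\|)^{-(d-1)/2}$ for $\xi\neq 0$. Dominated convergence in the spectral measure then gives $\|A_r^S\phi-P_0\phi\|_2\to 0$, where $P_0:=E(\{0\})$ is the orthogonal projection onto the subspace of $(T_t)$-invariant functions; when the action is ergodic, invariant functions are $\mu$-a.e.\ constant, so $P_0\phi=\int_\Omega\phi\,\text{d}\mu$.

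Third, for $\phi\in L^2\cap L^p$ whose spectral measure vanishes on a neighbourhood of the origin, the same decay yields $\sum_n\|A_{2^n}^S\phi-P_0\phi\|_2^2<\infty$, so $A_{2^n}^S\phi\to P_0\phi$ $\mu$-a.e.\ along the lacunary subsequence $r_n=2^n$; together with the invariant functions, such $\phi$'s span a dense subspace of $L^p(\mu)$. The main technical obstacle is then \emph{filling the lacunary gaps}: one must control $\sup_{2^n\leq r\leq 2^{n+1}}|A_r^S\phi-A_{2^n}^S\phi|$, typically through an oscillation or square-function estimate of the shape $\sum_n\bigl\|\sup_{2^n\leq r\leq 2^{n+1}}|A_r^S\phi-A_{2^n}^S\phi|\bigr\|_2^2<\infty$ refining the Stein--Bourgain inequality; this refinement is the technical heart of the works of Jones and Lacey cited above. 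Once the gaps are filled on the dense class, the first step promotes the convergence to every $\phi\in L^p(\mu)$, with limit $P_0\phi$, completing the proof.
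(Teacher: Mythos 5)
This statement is one the paper does not prove at all: it is recalled as a known theorem of Jones ($d\geq3$) and Lacey ($d=2$), with the explicit warning that the pointwise result is \emph{not} a direct consequence of the maximal inequality, since the spheres do not form a F\o lner family. Your proposal reproduces the correct architecture of the standard proof (Banach principle from the transferred maximal inequality, spectral identification of the limit as $P_0\phi$, a.e.\ convergence along a lacunary sequence for functions with a spectral gap), but the decisive step --- controlling $\sup_{2^n\leq r\leq 2^{n+1}}\left|A_r^S\phi-A_{2^n}^S\phi\right|$ --- is only named and attributed to Jones and Lacey, not established. Since that oscillation/square-function estimate is precisely the content of the theorem beyond the maximal inequality, the proposal as written is an outline that defers the heart of the matter to the cited references; it is a genuine gap if judged as a self-contained proof, though it is consistent with how the paper itself handles the result (by citation only).

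Two further points deserve care if you wanted to complete the argument. First, your dense class (invariant functions plus functions whose spectral measure avoids a neighbourhood of $0$) is dense in $L^2(\mu)$, and hence in $L^p(\mu)$ for $p\leq2$ because $\mu$ is finite; but for $p>2$ density in $L^p$ does not follow from density in $L^2$, so the reduction needs an extra step (e.g.\ proving the oscillation inequality directly in $L^p$, as Jones and Lacey do). Second, for $d=2$ the critical exponent is $\frac{d}{d-1}=2$, and the relevant local maximal and square-function bounds fail at $L^2$; so the plan of ``filling the gaps by an $L^2$ estimate for spectral-gap functions'' cannot work as stated in the plane, which is exactly why Lacey's argument is delicate. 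Your sketch acknowledges this implicitly, but a complete proof must work in $L^p$, $p>2$, rather than in $L^2$, in that case.
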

The preceding theorem is due to Jones in the case $d\geq3$ and to Lacey in the case $d=2$. Of course, the maximal inequality plays a crucial role in the proof, but we notice that the pointwise ergodic theorem is not a direct consequence of the maximal inequality since the spheres in $\R^d$ do not form a F\o lner family. 

The mean ergodic theorem is much easier to prove. Via the spectral theorem, it is a direct consequence of the fact that the Fourier transform of the measure $\sigma_r$ tends to zero at any non-zero point when $r$ tends to infinity. In order to set out the mean ergodic theorem, notice first that, by Fubini Theorem, for all $p\geq1$ and $\phi\in L^p(\mu)$, for each $r>0$, the function $A_r^S\phi$ is well defined as an element of $L^p(\mu)$. The mean ergodic theorem then states that there exists $\tilde\phi\in L^p(\mu)$, invariant under the transformations $T_t$, such that
$$
\lim_{r\to+\infty}\int_\Omega\left|A_r^S\phi-\tilde\phi\right|^p\,\text{d}\mu=0.
$$
\section{Averages on annuli. Basic facts}\label{facile}
A function $e$, from $(0,+\infty)$ into itself, is given such that $e(r)\leq r$. We consider averaging operators $M_r$ and $A_r$ defined in Introduction. The first thing to make sure is that the objects we want to study are well defined.
Let $(\Omega,\T,\mu)$ be a measure space and $(T_t)_{t\in\R^d}$ be a measurable and measure preserving action of the group $\R^d$ on this space. 
\begin{proposition} \label{def} If $\phi$ is an integrable function on $\Omega$, then, for $\mu$-almost all $\omega$, the function $t\mapsto\phi(T_t\omega)$ is locally integrable on $\R^d$. If the function $\phi$ is equal to zero $\mu$-almost everywhere, then, for $\mu$-almost all $\omega$, $\phi(T_t\omega)=0$ for almost all $t$.
\end{proposition}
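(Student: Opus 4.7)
The plan is to reduce both statements to a direct application of Fubini--Tonelli combined with the measure preserving property of the action. Joint measurability of $(t,\omega)\mapsto T_t\omega$ is built into the definition of a measurable action, so $(t,\omega)\mapsto\phi(T_t\omega)$ is jointly measurable on $\R^d\times\Omega$; this legitimizes every use of Fubini below.

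For the first assertion, I would fix $R>0$ and apply Tonelli's theorem to the non-negative measurable function $(t,\omega)\mapsto|\phi(T_t\omega)|$ on $B_R\times\Omega$. The key input is that, by measure preservation, $\int_\Omega|\phi(T_t\omega)|\,\text{d}\mu(\omega)=\int_\Omega|\phi|\,\text{d}\mu$ for every $t$. Tonelli then yields
$$
\int_\Omega\left(\int_{B_R}|\phi(T_t\omega)|\,\text{d}t\right)\text{d}\mu(\omega)=|B_R|\cdot\int_\Omega|\phi|\,\text{d}\mu<+\infty,
$$
so the inner integral is finite for $\mu$-almost every $\omega$. Taking $R$ running through $\N$ and intersecting the resulting countably many full-measure sets produces a single full-measure set on which $t\mapsto\phi(T_t\omega)$ is integrable on every ball, hence locally integrable. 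This is the only place where one has to be a little careful: local integrability is a statement about every compact set, but the countable family $(B_n)_{n\in\N}$ is enough by monotonicity of the integral.

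For the second assertion, let $N:=\{\omega\in\Omega\mid\phi(\omega)\neq0\}$, so $\mu(N)=0$, and consider the jointly measurable set $E:=\{(t,\omega)\in\R^d\times\Omega\mid\phi(T_t\omega)\neq0\}$. For each fixed $t$ the $t$-slice of $E$ is $T_t^{-1}(N)$, which has $\mu$-measure zero by invariance. Tonelli (applied after restricting, if necessary, to a $\sigma$-finite exhaustion of $\Omega$) gives
$$
(\lambda\otimes\mu)(E)=\int_{\R^d}\mu\bigl(T_t^{-1}N\bigr)\,\text{d}t=0,
$$
so for $\mu$-almost every $\omega$ the $\omega$-slice of $E$ has Lebesgue measure zero, which is exactly the conclusion.

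I do not expect any genuine obstacle: the whole argument is a standard combination of joint measurability, $T_t$-invariance of $\mu$, and Fubini--Tonelli. The only tiny point of vigilance is the countable intersection used in the first part, and the reduction to the $\sigma$-finite case in the second part if $\mu$ is only $\sigma$-finite rather than finite.
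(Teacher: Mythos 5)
Your argument is correct and follows essentially the same route as the paper: Tonelli combined with measure preservation on each ball $B_R$, then a countable intersection over $R\in\N$ to pass from ``for all $R$, almost every $\omega$'' to ``almost every $\omega$, for all $R$''. For the second assertion the paper simply reuses the same integral computation with $\int_\Omega|\phi|\,\text{d}\mu=0$ rather than phrasing it via the null set $E$, but this is the same Fubini argument in substance.
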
 
\begin{proof}
Let $\phi \in \mathcal{L}^1(\Omega)$. By Fubini Theorem, for all $R>0$, we have, 
\begin{eqnarray*}
\int_\Omega\left(\int_{B_R}|\phi|(T_t\omega) \, \text{d}t\right) \, \text{d}\mu(\omega)
& = &
\int_{B_R}\left(\int_{\Omega}|\phi|(T_t\omega) \, \text{d}\mu(\omega)\right) \, \text{d}t\\
& = &
|B_R|\int_\Omega|\phi|(\omega) \, \text{d}\mu(\omega).
\end{eqnarray*}
Hence, for all $R >0$, for $\mu$-almost all $\omega$, the function
$t \mapsto \phi(T_t\omega)$ is integrable on $B_R$.
This implies that for $\mu$-almost all $\omega$,
for all $R\in \N$,  the function
$t \mapsto \phi(T_t\omega)$ is integrable on $B_R$, which means that this function is locally integrable.
\\
If $\phi=0$ $\mu$-almost everywhere, the preceding calculus gives:\\
for all $R>0$, for $\mu$-almost all $\omega$, $\phi(T_t\omega)=0$ for almost all $t$ in $B_R$.\\
Once more we exchange the quantifiers and we obtain that for $\mu$-almost all $\omega$, for all $R>0$,
$\phi(T_tx)=0$ for almost all $t$ in $B_R$.
\end{proof}
\begin{corollary} For all $\phi\in L^1(\Omega)$, the family $(A_r\phi)_{r>0}$ is well defined $\mu$-almost everywhere.
\end{corollary}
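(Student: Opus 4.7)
The plan is to deduce this directly from Proposition \ref{def} with no extra work beyond checking that the two ingredients of the definition of $A_r\phi(\omega)$ make sense for a $\mu$-conull set of $\omega$, uniformly in $r>0$.

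First I would fix $\phi\in L^1(\mu)$ and invoke Proposition \ref{def} to obtain a measurable set $\Omega_0\subset\Omega$ with $\mu(\Omega\setminus\Omega_0)=0$ such that, for every $\omega\in\Omega_0$, the function $t\mapsto\phi(T_t\omega)$ is locally integrable on $\R^d$. Note that the choice of $\Omega_0$ does not depend on $r$; this is precisely the point of extracting local integrability rather than integrability on each fixed ball separately.

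Next I would observe that for every $r>0$, the annulus $C_{r,e(r)}$ is a bounded (hence relatively compact) subset of $\R^d$, being contained in the closed ball $B_r$. Consequently, for each $\omega\in\Omega_0$ and each $r>0$, the integral
$$
\int_{C_{r,e(r)}}\phi(T_t\omega)\,\text{d}t
$$
is a well-defined complex number. Moreover, since $e(r)>0$ (as $e$ takes values in $(0,+\infty)$), the annulus $C_{r,e(r)}$ has strictly positive Lebesgue measure, so the normalizing factor $|C_{r,e(r)}|^{-1}$ is finite and nonzero. Therefore $A_r\phi(\omega)$ is well defined for every $\omega\in\Omega_0$ and every $r>0$, which is the statement of the corollary.

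There is no real obstacle here: the only small subtlety is the order of quantifiers, namely that one needs local integrability of the orbit function for a single $\mu$-conull set of $\omega$ valid simultaneously for all $r>0$, not a conull set depending on $r$. This is exactly the strengthening achieved in the second half of Proposition \ref{def}'s proof (exchange of quantifiers via countable exhaustion by $B_R$, $R\in\N$), so nothing new is needed.
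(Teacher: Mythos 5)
Your argument is correct and is essentially the paper's own (implicit) proof: the corollary is read off from Proposition \ref{def}, since local integrability of $t\mapsto\phi(T_t\omega)$ on a single conull set of $\omega$ makes the integral over each bounded annulus $C_{r,e(r)}$ finite, and $|C_{r,e(r)}|>0$ takes care of the normalization. The only point you leave tacit is that the second assertion of Proposition \ref{def} is what guarantees $A_r\phi$ does not depend (up to a null set) on the chosen representative of $\phi\in L^1(\mu)$, but this does not affect the correctness of your argument for the statement as posed.
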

In particular, if $f$ is a locally integrable function on $\R^d$, then the family $(M_rf)_{r>0}$ is almost everywhere well defined.

\medbreak\medbreak
Now we will see that the mean ergodic theorem for these averages on annuli does not present any difficulty. Suppose that $\mu$ is a probability measure. If $\phi$ belongs to the Hilbert space $L^2(\mu)$, we denote by $\tilde\phi$ its orthogonal projection on the subspace of functions which are invariant under the transformations $T_t$, $t\in\R^d$. This operation extends to $L^1(\mu)$: $\tilde\phi$ is the conditionnal expectation of $\phi$ with respect to the sub-$\sigma$-algebra of $T_t$-invariant events.
\begin{theorem}[Mean ergodic theorem for averages on annuli]\label{met}
Let $d\geq2$. For all $p\geq1$ and for all $\phi\in L^p(\mu)$,
$$
\lim_{r\to+\infty}\int_\Omega\left|A_r\phi-\tilde\phi\right|^p\,\text{d}\mu=0.
$$
\end{theorem}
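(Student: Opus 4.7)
\emph{Plan.} I would establish the $p=2$ case via the spectral theorem and then bootstrap to arbitrary $p\geq 1$ by a density and contraction argument. Set $\nu_r := |C_{r,e(r)}|^{-1}\ind_{C_{r,e(r)}}\,dt$, a probability measure on $\R^d$, so that $A_r\phi(\omega)=\int\phi(T_t\omega)\,d\nu_r(t)$. Fubini and the measure preservation of each $T_t$ show that $A_r$ is a contraction on every $L^p(\mu)$, $p\geq 1$, and the conditional expectation $\phi\mapsto\tilde\phi$ is likewise an $L^p$-contraction. Since $\mu$ is a probability, $L^\infty(\mu)$ is dense in every $L^p(\mu)$, so a standard $3\e$-argument reduces the theorem to the case $\phi\in L^\infty(\mu)$.

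\emph{$L^2$ convergence.} For bounded $\phi$, apply the spectral theorem to the strongly continuous unitary representation $U_t\psi:=\psi\circ T_t$ of $\R^d$ on $L^2(\mu)$, and let $\mu_\phi$ be the associated scalar spectral measure on $\R^d$. The orthogonal projection $\phi\mapsto\tilde\phi$ is the spectral projection on $\{0\}$, and since $\hat\nu_r(0)=1$,
$$\|A_r\phi-\tilde\phi\|_2^2 \ = \ \int_{\R^d\setminus\{0\}} |\hat\nu_r(\xi)|^2\,d\mu_\phi(\xi).$$
As $|\hat\nu_r|\leq 1$ and $\mu_\phi$ is finite, dominated convergence reduces the matter to proving the Fourier decay $\hat\nu_r(\xi)\to 0$ for every $\xi\neq 0$. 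In polar coordinates, with $\sigma_1$ the normalized surface measure on $S^{d-1}$,
$$\hat\nu_r(\xi) \ = \ \frac{\int_{r-e(r)}^{r} \rho^{d-1}\,\hat\sigma_1(\rho\xi)\,d\rho}{\int_{r-e(r)}^{r} \rho^{d-1}\,d\rho}.$$
The Bessel-function bound $|\hat\sigma_1(\eta)|\leq C\min(1,|\eta|^{-(d-1)/2})$ (valid for $d\geq 2$), combined with an elementary split of the numerator according to whether the inner radius $r-e(r)$ is or is not comparable to $r$ (in the latter case one splits the integration at $\rho=1/|\xi|$ and uses the trivial bound for small $\rho$), yields $|\hat\nu_r(\xi)|=O_\xi(r^{-(d-1)/2})$ uniformly in the thickness function.

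\emph{From $L^2$ to $L^p$.} For bounded $\phi$ one has $|A_r\phi-\tilde\phi|\leq 2\|\phi\|_\infty$, so the interpolation $\|f\|_p^p\leq\|f\|_\infty^{p-2}\|f\|_2^2$ turns $L^2$-convergence into $L^p$-convergence for every $p\geq 2$; for $p\in[1,2]$ the inclusion $L^2(\mu)\subset L^p(\mu)$ on a probability space makes $L^p$-convergence automatic from the $L^2$ case. Together with the density reduction of the first paragraph, this completes the proof. The only serious technical ingredient is the Fourier decay of $\hat\nu_r$, which is exactly where the hypothesis $d\geq 2$ enters, mirroring the spherical situation recalled in Section~\ref{sphere}.
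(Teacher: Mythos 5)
Your argument is correct and follows essentially the same route as the paper: spectral theorem reducing matters to the decay of the Fourier transform of the normalized annulus measure at nonzero frequencies, then passage from $L^2$ to general $L^p$ via boundedness and density. The only differences are cosmetic — you use the quantitative bound $|\hat\sigma_1(\eta)|\leq C\min(1,|\eta|^{-(d-1)/2})$ with a case split on the inner radius (giving an explicit rate $O_\xi(r^{-(d-1)/2})$) where the paper only invokes the qualitative decay of $\hat\sigma_1$ and dominated convergence, and you replace the paper's dominated-convergence step for bounded $\phi$ by a Hölder interpolation inequality.
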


In the case when the thickness of the annuli tends to infinity with the radius (i.e. when $e(r)\to+\infty$ if $r\to+\infty$), one can show that the family $(C_{r,e(r)})_{r>0}$ has the F\o lner property. In this case, the mean ergodic theorem follows directly from general classical arguments (see for example \cite{etga}, Section 2.2). But in the case when the thickness does not tend to infinity, the F\o lner property is not satisfied anymore and another argument is required. We will use a Fourier transform argument; since it is not original, we only give its main lines.
\begin{proof}[Sketch of the proof of Theorem \ref{met}]
Using spherical coordinates, the Fourier transform
$$
\widehat c_r(z)=\frac1{\left|C_{r,e(r)}\right|}\int_{C_{r,e(r)}}\exp(2\pi iz\cdot x)\,\text{d}x\quad\quad(z\in\R^d)
$$
of the uniform measure on the annulus $C_{r,e(r)}$ can be expressed 
$$
\widehat{c_r}(z):=\frac{k_d}{\left|C_{r,e(r)}\right|}\int_{r-e(r)}^{r}\rho^{d-1}\widehat{\sigma_1}(\rho z)\,\text{d}\rho,
$$
where $\widehat{\sigma_1}$ is the Fourier transform of the uniform probability on the unit sphere and $k_d$ is a constant. 
We know that, for all $z\in\R^d\setminus\{0\}$, $\displaystyle\lim_{\rho\to+\infty}\widehat{\sigma_1}(\rho z)=0$ (see for example (\cite{St2}, page 347 ) or (\cite{etga}, Section 5.1). From this, we easily deduce that, for all non zero $z$, $\displaystyle\lim_{r\to+\infty}\widehat{c_r}(z)=0$.

Consider now $\phi\in L^2(\mu)$. The spectral theorem associates to $\phi$ a positive finite measure $\nu$ on $\R^d$ such that, for all $t\in\R^d$
$$
\int_{\Omega}\phi(T_t\omega)\overline{\phi(\omega)}\,\text{d}\mu(\omega)=\int_{\R^d}\exp(2\pi iz\cdot t)\,\text{d}\nu(z).
$$
Moreover the function $\phi$ is orthogonal to the subspace of $(T_t)_{t\in\R^d}$-invariant functions if and only if $\nu(\{0\})=0$. 

We have
$$
\int_{\Omega}\left|A_r\phi(\omega)\right|^2\,\text{d}\mu(\omega)=\int_{\R^d}\left|\widehat{c_r}(z)\right|^2\ \text{d}\nu(z),
$$
and this last quantity tends to $\nu(\{0\})$ as $r$ goes to infinity, by dominated convergence.

This proves the mean ergodic theorem for $p=2$.

For any $p\geq1$, we conclude along the following lines: \\for all $\phi\in L^{\infty}(\mu)$, $\displaystyle \tilde\phi=\lim_{r\to+\infty}A_r\phi$ in $L^2(\mu)$; \\by dominated convergence, this convergence takes also place in $L^p(\mu)$; \\by density, this convergence extends to any $\phi\in L^p(\mu)$.

\end{proof}

\section{Averages on annuli. Maximal inequality}\label{principal}
\subsection{Introduction}
As in the preceding section the family of annuli $C_r:=C_{r,e(r)}$ is given. Let $p\in[1,+\infty)$. We say that the family of operators $(A_r)_{r>0}$ satisfies the strong-$L^p$ maximal inequality if there exists a constant $C(d,p)$ such that, for any measure preserving system $(\Omega,\T,\mu,(T_t)_{t\in\R^d})$ and for any $\phi\in L^p(\mu)$,
$$
\int_{\Omega}\sup_{r>0}\left|A_r\phi(\omega)\right|^p\,\text{d}\mu(\omega)\leq C(d,p)\int_{\Omega}\left|\phi(\omega)\right|^p\,\text{d}\mu(\omega)\;.
$$
We say that the family of operators $(A_r)_{r>0}$ satisfies the weak-$L^p$ maximal inequality if there exists a constant $C(d,p)$ such that, for any measure preserving system $(\Omega,\T,\mu,(T_t)_{t\in\R^d})$ and for any $\phi\in L^p(\mu)$,
$$
\mu\left\{\omega\in\Omega\mid\sup _{r>0}\left|A_r\phi(\omega)\right|>1\right\}\leq C(d,p)\int_{\Omega}\left|\phi(\omega)\right|^p\,\text{d}\mu(\omega)\;.
$$

\medbreak
In the study of maximal inequalities for averages on annuli, we will keep in mind the following two classical facts.\begin{itemize}\item Thanks to Calder\`on transference principle, the validity of a maximal inequality in the general context of a measure preserving system $(\Omega,\T,\mu,(T_t)_{t\in\R^d})$ will be guaranteed by the validity of this maximal inequality in the particular context of the action of $\R^d$ on itself by translation.
\item Thanks to the general construction of Rokhlin towers, the negation of a maximal inequality in this particular context implies the negation of this maximal inequality in any aperiodic system.
\end{itemize}
\subsection{Direct consequences of known results on spheres and balls}
The average operator on an annulus can always be written as an average of spherical averages:
$$
M_rf(x)=\frac{c_d}{\left|C_{r}\right|}\int_{r-e(r)}^{r}\rho^{d-1}\left(\int_{S_1}f(x+\rho\theta)\,\text{d}\sigma_1(\theta)\right)\,\text{d}\rho\;,
$$
where $c_d$ is a constant.
As a consequence, any maximal inequality for spherical averages gives a maximal inequality for averages on annuli. Thus, the following theorem is a corollary of Theorem \ref{max-in-s}.
\begin{theorem}\label{easy-un}
Let $d \geq 2$. If $p>\frac{d}{d-1}$, then the family of operators $(A_r)_{r>0}$ satisfies the strong-$L^p$ maximal inequality. 
\end{theorem}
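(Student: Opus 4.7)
The plan is to reduce the maximal inequality for $A_r$ to the spherical maximal inequality recalled in Section \ref{sphere}, using the representation given just before the statement of the theorem together with the Calderón transference principle that the authors have pointed out.

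First I would work in the translation action of $\R^d$ on itself with Lebesgue measure, so the operators $A_r$ become the operators $M_r$. The identity
$$
M_rf(x)=\frac{c_d}{|C_{r}|}\int_{r-e(r)}^{r}\rho^{d-1}M_\rho^S f(x)\,\text{d}\rho
$$
displays $M_r f(x)$ as an average of the spherical averages $M_\rho^S f(x)$ with respect to the measure $\frac{c_d \rho^{d-1}}{|C_r|}\,\text{d}\rho$ on $[r-e(r),r]$. Applying the identity to $f\equiv 1$ shows that this measure has total mass $1$, so it is a probability measure on that interval.

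Next I would bound the full maximal function. For $f\geq 0$, each $M_\rho^S f(x)$ is non-negative, and the averaging observation gives
$$
M_rf(x)\leq \sup_{\rho>0}M_\rho^S f(x) \cdot \frac{c_d}{|C_r|}\int_{r-e(r)}^{r}\rho^{d-1}\,\text{d}\rho=\sup_{\rho>0}M_\rho^S f(x).
$$
For a general (complex-valued) $f$, the same computation applied to $|f|$ yields $|M_rf(x)|\leq M_r|f|(x)\leq\sup_{\rho>0}M_\rho^S|f|(x)$, and therefore
$$
\sup_{r>0}|M_r f(x)|\leq \sup_{\rho>0}M_\rho^S|f|(x).
$$
Raising to the $p$-th power, integrating over $\R^d$, and invoking the Stein--Bourgain maximal inequality (\ref{max-in-s}) for spherical averages (valid because $p>d/(d-1)$) produces the strong-$L^p$ maximal inequality for $(M_r)$ with a constant depending only on $d$ and $p$.

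Finally, the Calderón transference principle — explicitly flagged by the authors as the tool that carries maximal inequalities from the translation action to arbitrary measurable measure-preserving $\R^d$-actions — transports this inequality to the operators $A_r$ on any $(\Omega,\T,\mu,(T_t)_{t\in\R^d})$. There is really no obstacle in this argument; the only point that has to be done carefully is the observation that the weight in the spherical decomposition is a probability density, so that the pointwise domination by the spherical maximal function carries no extraneous constant. Everything else is a direct quotation of the cited theorem for spheres.
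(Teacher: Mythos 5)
Your proof is correct and follows essentially the same route as the paper: the authors also write $M_rf$ as an average of the spherical means $M_\rho^S f$ against the probability density $c_d\rho^{d-1}/|C_r|$ on $[r-e(r),r]$, dominate the annular maximal function pointwise by the spherical maximal function, invoke the Stein--Bourgain inequality (\ref{max-in-s}), and transfer by Calder\`on's principle. Nothing further is needed.
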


For averages on balls, the strong-$L^p$ maximal inequality ($p>1$) and the weak-$L^1$ maximal inequality are known (\cite{w}). Just writing down the trivial inequality that the integral of a non-negative function on $C_r$ is bounded by its integral on the ball $B_r$, we obtain maximal inequalities for averages on annuli, as soon as these annuli have a volume comparable to the volume of the ball. 
\begin{theorem}\label{boule}
If there exists $\gamma>0$ such that, for all $r>0$, $e(r)\geq\gamma r$, then, for all $p > 1$, the family $(A_r)_{r>0}$ satisfies the strong-$L^p$
and the weak-$L^1$ maximal inequalities.
\end{theorem}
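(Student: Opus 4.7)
The plan is to reduce the annular maximal function to the classical ball maximal function and then invoke the Wiener/Hardy--Littlewood inequalities cited as \cite{w}.

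First I would extract from the hypothesis a uniform lower bound on the volume of the annulus relative to the ball. Since $e(r)\geq\gamma r$ forces $r-e(r)\leq(1-\gamma)r$, one has
$$
\left|C_{r,e(r)}\right|=\kappa_d\bigl(r^d-(r-e(r))^d\bigr)\geq\kappa_d\bigl(1-(1-\gamma)^d\bigr)r^d=c\,|B_r|,
$$
with $c:=1-(1-\gamma)^d>0$ depending only on $\gamma$ and $d$.

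Next, since $C_{r,e(r)}\subset B_r$, for any non-negative measurable $\phi$ on $\Omega$ and any $\omega$ for which the integrals make sense,
$$
A_r\phi(\omega)=\frac1{\left|C_{r,e(r)}\right|}\int_{C_{r,e(r)}}\phi(T_t\omega)\,\text{d}t\leq\frac1{c\,|B_r|}\int_{B_r}\phi(T_t\omega)\,\text{d}t=\frac1c A_r^B\phi(\omega),
$$
where $A_r^B$ denotes the ball average. Taking the supremum over $r>0$ and replacing $\phi$ by $|\phi|$ yields the pointwise domination $\sup_{r>0}|A_r\phi|\leq c^{-1}\sup_{r>0}A_r^B|\phi|$.

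Finally I would invoke Wiener's ergodic maximal theorem: for the family of ball averages of an $\R^d$-action, the strong-$L^p$ maximal inequality holds for every $p>1$ and the weak-$L^1$ maximal inequality holds. (In the reference frame of Calder\`on transference, as recalled in the introduction of this section, it suffices to know these inequalities for $\R^d$ acting on itself by translation, which is the classical Hardy--Littlewood theorem; the transference step is standard.) The pointwise domination above then gives the same conclusions for $(A_r)_{r>0}$, with constants $c^{-p}C(d,p)$ and $c^{-1}C(d)$ respectively.

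There is no genuine obstacle: the whole statement is a one-line comparison once the volume lower bound is in hand, and every nontrivial analytic input is borrowed intact from the theory of ball averages.
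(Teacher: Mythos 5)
Your proposal is correct and is essentially the paper's own argument: the paper proves Theorem \ref{boule} precisely by noting that the integral of a non-negative function over $C_{r,e(r)}$ is bounded by its integral over $B_r$, that the hypothesis $e(r)\geq\gamma r$ makes $\left|C_{r,e(r)}\right|$ comparable to $|B_r|$, and then invoking the known ball-average (Wiener/Hardy--Littlewood) maximal inequalities from \cite{w}. Your explicit constant $c=1-(1-\gamma)^d$ and the pointwise domination $\sup_{r>0}|A_r\phi|\leq c^{-1}\sup_{r>0}A_r^B|\phi|$ just spell out the same comparison.
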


In the remainder of Section~3, we suppose that $d\geq2$.

Theorems \ref{easy-un} and \ref{boule} are positive results concerning maximal inequalities. On the other hand, we know that spherical averages do not satisfy any $L^p$ maximal inequality as soon as $p\leq\frac{d}{d-1}$. We could have expected that the range of validity of a maximal inequality would evolve ``continuously'' with the choice of the thickness function $e$. Next theorem shows that it is not the case. It says essentially that, as soon as the function $e$ is reasonably regular and does not satisfy the hypothesis of Theorem \ref{boule}, averages on annuli behave like averages on spheres. 

\subsection{The dichotomy theorem}

\begin{theorem}\label{main}
If one of the following three hypothesis is satisfied
\begin{itemize}\item[(h1)] the function $e$ is non-decreasing and $\displaystyle\inf_{r>0}\frac{e(r)}{r}=0$,
\item[(h2)] $\displaystyle\lim_{r\to+\infty}\frac{e(r)}{r}=0$,
\item[(h3)] $\displaystyle\lim_{r\to0}\frac{e(r)}{r}=0$,
\end{itemize}
then, for all $p\leq\frac{d}{d-1}$, the family of operators $(A_r)_{r>0}$ does not satisfy the weak-$L^p$ maximal inequality. 
\end{theorem}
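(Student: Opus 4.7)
The strategy is to proceed by contradiction: assuming that $(A_r)$ satisfies a weak-$L^p$ maximal inequality for some $p \leq \frac{d}{d-1}$, we derive such an inequality for the spherical averages $(M^S_r)$, contradicting the negative result recalled in Section \ref{sphere}.

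By the Calder\`on transference principle it suffices to refute the weak-$L^p$ maximal inequality for $(M_r)$ on $\R^d$ with the translation action. The central ingredient is a rescaling identity: for any $s>0$, set $f_s(x):=f(x/s)$ and $\tilde e^{(s)}(r'):=e(sr')/s$; a direct change of variable then gives
\[
M_r f_s(x) = \tilde M^{(s)}_{r/s} f(x/s),
\]
where $\tilde M^{(s)}_{r'}$ denotes the annular average with thickness $\tilde e^{(s)}$. Both sides of the weak-$L^p$ inequality scale by the factor $s^d$, so the weak-$L^p$ constant of $(\tilde M^{(s)})$ coincides with that of $(M_r)$ for every $s>0$.

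Next I select a scaling sequence $(s_n)$ so that $\tilde e^{(s_n)}(r')/r' \to 0$ uniformly on a fixed compact $K \subset (0,+\infty)$: under (h2) take $s_n\to +\infty$ with $K=[1/2,2]$; under (h3) take $s_n\to 0$ with the same $K$; under (h1) take $s_n:=r_n$ with $e(r_n)/r_n\to 0$, and use the monotonicity of $e$ to verify convergence on $K=[1/2,1]$. For any continuous compactly supported $\phi$, the representation
\[
\tilde M^{(s_n)}_{r'}\phi(x) = \frac{\int_{r'-\tilde e^{(s_n)}(r')}^{r'} \rho^{d-1} M^S_\rho\phi(x)\,\mathrm{d}\rho}{\int_{r'-\tilde e^{(s_n)}(r')}^{r'} \rho^{d-1}\,\mathrm{d}\rho}
\]
combined with the joint continuity of $(\rho,x)\mapsto M^S_\rho\phi(x)$, shows that $\sup_{r'\in K}\tilde M^{(s_n)}_{r'}\phi \to \sup_{r'\in K}M^S_{r'}\phi$ uniformly on compact sets in $x$. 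A standard level-set upper-semicontinuity argument ($|\{h>1\}|\leq \liminf|\{h_n>1-\e\}|$) passes the assumed weak-$L^p$ inequality for $(\tilde M^{(s_n)})$ (restricted to $r'\in K$) to a weak-$L^p$ inequality for $\sup_{r'\in K}M^S_{r'}\phi$ with the same constant $C(d,p)$.

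Finally, the scale invariance $M^S_r\phi_t(x)=M^S_{tr}\phi(tx)$, with $\phi_t(x):=\phi(tx)$, transfers the preceding inequality to $\sup_{r'\in tK}M^S_{r'}$ for every $t>0$ with the same constant; exhausting $(0,+\infty)$ by an increasing family of such rescaled intervals and invoking monotone convergence extends it to $\sup_{r'>0}M^S_{r'}\phi$. A density argument from $C_c(\R^d)$ to $L^p(\R^d)$ yields the full spherical weak-$L^p$ maximal inequality, contradicting the negative result. The main obstacle lies in the limiting step, specifically in keeping the constant uniform as one first passes $n\to\infty$ and then exhausts all scales; this is why I work with a single fixed continuous compactly supported test function throughout the limit procedure, and only at the end invoke density.
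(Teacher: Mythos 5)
Your reduction steps are fine up to a point: the rescaling identity $M_rf_s(x)=\tilde M^{(s)}_{r/s}f(x/s)$ and the equality of weak-$L^p$ constants are correct (this is essentially the same rescaling trick the paper uses), the choice of scaling sequences under (h1), (h2), (h3) is sound, and the limit argument for a fixed $\phi\in C_c(\R^d)$ does yield a weak-$L^p$ bound, with the assumed constant, for the \emph{windowed} spherical maximal operator $\sup_{r'\in K}M^S_{r'}$, where $K$ is a fixed compact interval of radii. The genuine gap is the last step. Dilation invariance gives you the same bound for $\sup_{r'\in tK}$ for every $t>0$, but these windows all have the same \emph{multiplicative} length (ratio $4$ for $K=[1/2,2]$); they are not nested, so there is no ``increasing family of such rescaled intervals'' exhausting $(0,+\infty)$, and monotone convergence does not apply. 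Weak-type bounds with a uniform constant on countably many disjoint dyadic-type windows do not combine into a weak-type bound for the sup over their union (the level sets only obey subadditivity, which destroys the constant). So you never legitimately reach the global inequality $\sup_{r'>0}M^S_{r'}$, and the contradiction with the optimality of $d/(d-1)$ recalled in Section \ref{sphere} (which concerns the global operator) is not established.

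The natural repair is to notice that the contradiction is already available at the windowed level: the spherical maximal operator restricted to radii in a fixed compact interval already fails the weak-$L^{d/(d-1)}$ inequality (take $f(y)=\|y-y_0\|^{1-d}\bigl|\ln\|y-y_0\|\bigr|^{-1}$ near a point $y_0$; every sphere through $y_0$ gives an infinite average). But this local negative result is not the statement recalled in Section \ref{sphere}, and proving it is essentially the content of the paper's own construction: the paper does not pass to a limit of operators at all, but builds one explicit function (Lemma \ref{func-crit}, supported by the geometric Lemma \ref{evid-geom}) whose annular maximal function is pointwise large on an annulus, then rescales the \emph{function} (Lemma \ref{cour} and the end of the proof) to show the weak-$L^{d/(d-1)}$ constant would have to exceed $c\,(\ln|\ln\delta|)^{d/(d-1)}$ for every admissible $\delta$, which is a quantitative blow-up rather than a qualitative comparison with spheres. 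In short: your scheme can be made to work, but only after you supply (or cite) the failure of the \emph{local} spherical maximal inequality at $p\le d/(d-1)$, which amounts to redoing the counterexample the paper constructs directly; as written, the passage from fixed-ratio windows to the global spherical maximal operator is invalid.
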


In fact, we will consider the following hypothesis on the thickness function~$e$: the function $e$ is defined on a sub-interval $I$ of $(0,+\infty)$ and
\begin{equation}\label{hyp}
\exists a>1, \forall\delta>0, \exists h>0, \text{ such that }[h,ah]\subset I \text{ and }\forall r\in[h,ah], e(r)\leq\delta r.
\end{equation}
This is not the optimal hypothesis, but it keeps a useful form. It is not difficult to verify that each of the Hypothesis (h$i$) of the Theorem implies Hypothesis~(\ref{hyp}).

We will prove that, under Hypothesis (\ref{hyp}), there is no weak-$L^{d/(d-1)}$ maximal inequality. Following the same construction or using a classical interpolation argument, the result extends to any $L^p$, with $p\leq\frac{d}{d-1}$.

 Let us begin by a geometrical lemma.\begin{lemma}\label{evid-geom}
Let $\mathcal{C}$ be an annulus of thickness $\epsilon \in ]0,1]$
with center at $x$ and external radius $\|x\| > 1$.
If $\rho \in [\epsilon,1]$ then $\sigma_\rho(S_{\rho}
\cap \mathcal{C}) \geq c_d \epsilon/ \rho$.
\end{lemma}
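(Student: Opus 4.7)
My plan is a direct computation in spherical coordinates on $S_\rho$. For a point $y\in S_\rho$, let $\theta\in[0,\pi]$ denote the angle at the origin between $y$ and $x$, so that $y\cdot x=\rho\|x\|\cos\theta$. Expanding $\|y-x\|^2=\rho^2-2\,y\cdot x+\|x\|^2$, the defining inequalities $\|x\|-\epsilon\le\|y-x\|\le\|x\|$ of the annulus $\mathcal C$ translate into
$$
\frac{\rho}{2\|x\|}\ \le\ \cos\theta\ \le\ \frac{\rho}{2\|x\|}+\frac{\epsilon}{\rho}-\frac{\epsilon^2}{2\rho\|x\|}.
$$
Call the left-hand side $a$ and the right-hand side $b$. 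Since $\|x\|>1$, we have $a\le\rho/2\le 1/2$; since $\epsilon\le 1<\|x\|$, the width satisfies $b-a=(\epsilon/\rho)(1-\epsilon/(2\|x\|))\ge \epsilon/(2\rho)$; and since $\epsilon\le\rho$, we have $a+\epsilon/(2\rho)\le 1$. Hence the sub-interval $[a,\,a+\epsilon/(2\rho)]$ is contained in $[a,b]\cap[0,1]$.

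Next I would invoke the standard spherical-measure formula. Parameterizing points of $S_\rho$ in the form $\rho(u,\sqrt{1-u^2}\,\omega)$ with $u=\cos\theta\in[-1,1]$ and $\omega$ in the unit sphere of the orthogonal hyperplane, the $\sigma_\rho$-measure of a band $\{\cos\theta\in[\alpha,\beta]\}$ equals a dimensional constant times $\int_\alpha^\beta(1-u^2)^{(d-3)/2}\,du$. Applied to $[\alpha,\beta]=[a,\,a+\epsilon/(2\rho)]$, this gives a lower bound for $\sigma_\rho(S_\rho\cap\mathcal C)$, and it remains only to bound the integrand from below on that interval. For $d=2$, the integrand is $\ge 1$ on $[0,1)$ and the conclusion is immediate. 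For $d\ge 3$, I would distinguish two cases: if $a+\epsilon/(2\rho)\le 3/4$, the whole interval lies in $[0,3/4]$, where $(1-u^2)^{(d-3)/2}\ge(7/16)^{(d-3)/2}$, yielding an integral of order $c_d\,\epsilon/\rho$; otherwise $[a,3/4]\subset[a,\,a+\epsilon/(2\rho)]$ has length $\ge 1/4$ because $a\le 1/2$, on which the integrand is bounded below by the same constant, and the inequality $\epsilon/\rho\le 1$ then converts this lower bound into the desired form $c_d\,\epsilon/\rho$.

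The geometric picture behind the calculation is that the outer sphere of $\mathcal C$ passes through the origin (since $\|0-x\|=\|x\|$), so near the origin the annulus looks like a curved slab of radial width $\epsilon$ orthogonal to the direction of $x$; the sphere $S_\rho$ cuts this slab in a narrow equatorial band of angular width $\sim\epsilon/\rho$. No step is really hard; the only care required is to verify that the cosine interval always contains a sub-interval of length $\epsilon/(2\rho)$ lying inside $[0,1]$ — this is exactly what the hypotheses $\epsilon\le\rho\le 1$ and $\|x\|>1$ provide — and, in the case $d=2$, to keep the integration away from the (harmless, but distracting) endpoint $u=1$.
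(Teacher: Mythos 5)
Your proof is correct, and it follows the same basic strategy as the paper's: place $x$ on a coordinate axis, observe that for $y\in S_\rho$ the annulus condition $\|x\|-\epsilon\le\|y-x\|\le\|x\|$ is exactly the cosine band $\frac{\rho}{2\|x\|}\le\cos\theta\le\frac{\rho}{2\|x\|}+\frac{\epsilon}{\rho}-\frac{\epsilon^2}{2\rho\|x\|}$ (the paper writes the same interval in the variable $\rho\cos\alpha$), and then bound from below the spherical measure of that band. Where you differ is in how the band's measure is estimated. The paper only details the case $d=3$, where the change of variable $u=\rho\cos\alpha$ makes the projected density constant (Archimedes' theorem), so the measure is exactly $2\pi\rho$ times the length of the cosine interval, and it checks that this interval sits inside $[-\rho,\rho]$ with length at least $\epsilon/2$; the general dimension is treated as ``geometrically evident''. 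You instead invoke the general projection density $(1-u^2)^{(d-3)/2}$ and bound it below, using $a\le 1/2$, $b-a\ge\epsilon/(2\rho)$, $a+\epsilon/(2\rho)\le 1$ (all of which do follow from $\epsilon\le\rho\le1<\|x\|$ as you state) together with the case split $a+\epsilon/(2\rho)\le 3/4$ or not, plus $\epsilon/\rho\le1$ in the second case. This buys a uniform proof for all $d\ge2$ at once, including $d=2$ where the density is $\ge1$, at the cost of the small case analysis that the exact $d=3$ computation avoids. I see no gap.
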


In dimension 2, this lemma says that the length of an arc defined as the intersection of the annulus $C_{\|x\|,\epsilon}$ with a circle of radius $\rho$ centered on the exterior boundary of the annulus is greater than $\epsilon$. This can be considered as a geometrical evidence. However, we propose a proof, given here with details in dimension 3.
\begin{proof}
We choose a Cartesian system of coordinates such that point $x$ has coordinates $(0,0,\|x\|)$, and we also use spherical coordinates $(\rho,\phi,\alpha)$, $\rho>0$, $0<\phi<2\pi$, $0<\alpha<\pi$. We have
\begin{eqnarray*}
{\rm Area}(S_{\rho}
\cap \mathcal{C}) 
& = &
\int_{\{\theta \in S(0,1) \mid \|x\| -\epsilon \leq
  \|\rho \theta - x\| \leq \|x\|\}}\rho^2 \, \text{d}\sigma(\theta)\\
& = &
\int_0^{2\pi}\int_0^{\pi} {\bf 1}_I(\rho,\phi,\alpha)\rho^2\sin\alpha \, \text{d}\alpha \,
\text{d}\phi,
\end{eqnarray*}
where $I$ is the integration domain.\\
We see that
$$I=\left\{(\rho,\phi,\alpha) \mid
\frac{\rho^2}{2\|x\|}\leq \rho\cos\alpha\leq \frac{2\epsilon\|x\|+\rho^2-\epsilon^2}{2\|x\|}\right\}.$$
With the change of variable $u=\rho\cos\alpha$, we can write
\begin{eqnarray*}
{\rm Area}(S_{\rho}
\cap \mathcal{C}) 
& = & 2 \pi \rho \int_{-\rho}^{\rho} {\bf 1}_{\frac{\rho^2}{2\|x\|}\leq u
  \leq 
\frac{2\epsilon\|x\|+\rho^2-\epsilon^2}{2\|x\|}}(u)
\, \text{d}u\\
& = &
 2 \pi \rho\ \lambda\left([-\rho,\rho] \cap
\left[\frac{\rho^2}{2\|x\|},\frac{2\epsilon\|x\|+\rho^2-\epsilon^2}{2\|x\|}\right]\right)\\
& = &
 2 \pi \rho\ \min\left(\rho-\frac{\rho^2}{2\|x\|},\frac{2\epsilon\|x\|+\rho^2-\epsilon^2}{2\|x\|}-\frac{\rho^2}{2\|x\|}\right)\\
& = &
 2 \pi
 \rho\left(\frac{2\epsilon\|x\|+\rho^2-\epsilon^2}{2\|x\|}-\frac{\rho^2}{2\|x\|}\right) 
\mbox{ , since } \|x\| > 1 \geq \rho \geq \epsilon,\\
& \geq &
c_3 \rho \epsilon.
\end{eqnarray*}

\end{proof}

\begin{lemma}\label{func-crit}
There exists a non-negative function $f$ on $\R^d$ such that
\begin{equation}\label{int-as}
\int_{\R^d} f(x)^{d/(d-1)}\,\text{d}x<\infty,
\end{equation}
and, if $\|x\|>1$ and $e(\|x\|)\leq\frac14$, then
$$
M_{\|x\|}f(x)\geq\frac{\ln|\ln e(\|x\|)|}{\|x\|^{d-1}}.
$$
\end{lemma}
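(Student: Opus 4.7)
The plan is to take a radial, scale-critical bump concentrated near the origin. Concretely, set
$$ f(y):=K\,\mathbf 1_{\{0<\|y\|\le 1/2\}}\,\|y\|^{-(d-1)}\bigl(\ln(1/\|y\|)\bigr)^{-1}, $$
for a large constant $K$ to be chosen at the end. First I would check the integrability condition~\eqref{int-as}: in polar coordinates,
$$ \int_{\R^d}f^{d/(d-1)}=K^{d/(d-1)}\omega_d\int_0^{1/2}\rho^{-1}\bigl(\ln(1/\rho)\bigr)^{-d/(d-1)}\,\text d\rho, $$
and the substitution $u=\ln(1/\rho)$ turns this into $\int_{\ln 2}^{\infty}u^{-d/(d-1)}\,\text du<\infty$, since $d/(d-1)>1$. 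So $f\in L^{d/(d-1)}$ independently of $K$.

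Now fix $x$ with $r:=\|x\|>1$ and $\epsilon:=e(r)\le 1/4$. The annulus centered at $x$ of thickness $\epsilon$ and outer radius $r$ passes through the origin, and since $f$ is radial,
$$ \int_{x+C_{r,\epsilon}}f(y)\,\text dy=\int_0^\infty g(\rho)\,\mathcal H^{d-1}\!\bigl(S_\rho\cap(x+C_{r,\epsilon})\bigr)\,\text d\rho, $$
where $g(\rho):=K\rho^{-(d-1)}(\ln(1/\rho))^{-1}\mathbf 1_{(0,1/2]}(\rho)$ and $S_\rho$ denotes the sphere of radius $\rho$ centered at the origin. By Lemma~\ref{evid-geom}, applied to $\mathcal C=x+C_{r,\epsilon}$, we have for every $\rho\in[\epsilon,1]$ that $\sigma_\rho(S_\rho\cap\mathcal C)\ge c_d\epsilon/\rho$, which translates into
$$ \mathcal H^{d-1}\!\bigl(S_\rho\cap(x+C_{r,\epsilon})\bigr)\ge c'_d\,\epsilon\,\rho^{d-2}. $$

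Combining these and restricting the integration to $\rho\in[\epsilon,1/2]$ (which is non-empty because $\epsilon\le 1/4$), I get
$$ \int_{x+C_{r,\epsilon}}f(y)\,\text dy\ge c'_d K\epsilon\int_\epsilon^{1/2}\rho^{-1}\bigl(\ln(1/\rho)\bigr)^{-1}\,\text d\rho=c'_d K\epsilon\bigl(\ln\ln(1/\epsilon)-\ln\ln 2\bigr). $$
On the other hand $|C_{r,\epsilon}|=\frac{\omega_d}{d}\bigl(r^d-(r-\epsilon)^d\bigr)\le\omega_d\,r^{d-1}\epsilon$, so
$$ M_r f(x)\ge\frac{c'_d K}{\omega_d\,r^{d-1}}\bigl(\ln\ln(1/\epsilon)-\ln\ln 2\bigr). $$
Since $\epsilon\le 1/4$, one has $\ln\ln(1/\epsilon)\ge\ln\ln 4>0$, hence $\ln\ln(1/\epsilon)-\ln\ln 2\ge\kappa_d\ln|\ln\epsilon|$ for a positive constant $\kappa_d$. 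Choosing $K$ so that $c'_d K\kappa_d/\omega_d\ge 1$ yields the required pointwise lower bound, completing the proof.

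The only place where there is any real content is the geometric lower bound of Lemma~\ref{evid-geom}, which has been established separately; the rest is a careful pairing of the endpoint integrability of $f$ with the $\epsilon/\rho$ estimate on spherical cross-sections, so the main thing to watch is just tracking the $\epsilon$ that appears in the slice area and cancels with the $\epsilon$ coming from $|C_{r,\epsilon}|$.
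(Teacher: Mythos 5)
Your proof is correct and follows essentially the same route as the paper: the same critical radial function $\|y\|^{-(d-1)}(\ln(1/\|y\|))^{-1}\mathbf 1_{B_{1/2}}$ (up to a constant), the same slicing of the shifted annulus by spheres centered at the origin, and the same use of Lemma \ref{evid-geom} to produce the $\epsilon$ that cancels against $|C_{r,\epsilon}|\lesssim r^{d-1}\epsilon$, leading to the $\ln\ln(1/\epsilon)$ lower bound. The only differences are presentational (explicit coarea slicing and explicit tracking of the constant $K$ rather than a final rescaling).
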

\begin{proof}
We consider a non-negative function $f$ such that $$
   f(x)=   
                          -\frac{1}{\|x\|^{d-1}\ln(\|x\|)} \quad
                            \mbox{ si }
                          x \in  B_{1/2} .
                          $$
Outside the ball $B_{1/2}$ the function $f$ can be chosen to be zero. It is also possible to choose $f$ to be smooth (except at zero) and with bounded support. In any case the integrability assumption (\ref{int-as}) is satisfied.

For all $x \in \R^d$, we have 
\begin{eqnarray*}
M_{\|x\|}f(x)
& \geq &
-c_d\frac{1}{\|x\|^{d-1}e(\|x\|)}\int_{
                       \substack{\|x\|-e(\|x\|) \leq \|t\| \leq \|x\| \\
                          \|x+t\| \leq 1/2}}\frac{\text{d}t}{\|x+t\|^{d-1} \ln(\|x+t\|)}\\
& \geq &
-c_d\frac{1}{\|x\|^{d-1}e(\|x\|)}\int_{
                       \substack{\|x\|-e(\|x\|) \leq \|t|| \leq \|x\| \\
                          e(\|x\|) \leq \|x+t\| \leq 1/2}}
\frac{\text{d}t}{\|x+t\|^{d-1} \ln(\|x+t\|)}\\
& \geq &
-c_d\frac{1}{\|x\|^{d-1}e(\|x\|)}\int_{
                       \substack{\|x\|-e(\|x\|) \leq \|t-x\| \leq \|x\| \\
                          e(\|x\|) \leq \|t\| \leq 1/2}}
\frac{\text{d}t}{\|t\|^{d-1} \ln(\|t\|)}.
\end{eqnarray*}
Using spherical coordinates, we can write \\
\begin{multline*}\int_{
\substack{\|x\|-e(\|x\|) \leq \|t-x\| \leq \|x\|\\ 
e(\|x\|) \leq \|t\| \leq 1/2}}
\frac{\text{d}t}{\|t\|^{d-1} \ln(\|t\|)}\\
 = 
c_d \int_{e(\|x\|)}^{1/2}\frac{1}{\rho^{d-1} \ln(\rho)}
\left( \int_{\{\theta \in S_1 \mid \|x\|-e(\|x\|) \leq \|\rho\theta - x\| \leq \|x\|\}}
\rho^{d-1} \, \text{d}\sigma(\theta)\right) \, \text{d}\rho\\
 \geq  
c_d \int_{e(\|x\|)}^{1/2}\frac{1}{\rho \ln(\rho)}e(\|x\|) \,
\text{d}\rho \; ,
\end{multline*}
the last identity beeing a consequence of Lemma \ref{evid-geom} (since we suppose that $\|x\|>1$).
We obtain
\begin{eqnarray*}
M_{\|x\|}f(x)
& \geq &
-c_d\frac{1}{\|x\|^{d-1}e(\|x\|)}e(\|x\|)\int_{
                       e(\|x\|) \leq \rho \leq 1/2
                      }\frac{d\rho}{\rho \ln(\rho)}\\
& \geq &
c_d\frac{1}{\|x\|^{d-1}}(\ln|\ln e(\|x\|)|-\ln|\ln(1/2)|)\\
& \geq &
\frac{1}{2}c_d\frac{1}{\|x\|^{d-1}}\ln|\ln e(\|x\|)| \, , 
\mbox{ since }e(\|x\|)\leq1/4.
\end{eqnarray*}
In order to obtain the function announced in Lemma \ref{func-crit} we just have to multiply this function $f$ by a well chosen real constant.
\end{proof}
We know fix a function $f$ satisfying the properties presented in Lemma \ref{func-crit}. 
\begin{lemma} \label{cour}Let $a>1$ and $0<\delta<\frac14$ be given such that, for all $r\in[1,a]$, $e(r)\leq\delta$. 

There exists $\lambda>0$ such that
$$
\lambda^{d/(d-1)}\left|\left\{x\in\R^d\mid M_{\|x\|}f(x)\geq\lambda\right\}\right|\geq c_d(1-a^{-d})(\ln|\ln\delta|)^{d/(d-1)}.
$$
\end{lemma}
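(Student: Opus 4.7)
The plan is to exploit the pointwise lower bound from Lemma~\ref{func-crit} on the spherical shell $\{1\le\|x\|\le a\}$ and then optimize the choice of threshold $\lambda$ so that the entire shell is captured in the super-level set.

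First, I would observe that the hypothesis gives $e(r)\le\delta<1/4$ for every $r\in[1,a]$, so Lemma~\ref{func-crit} applies on the entire shell $\{x\in\R^d\mid 1<\|x\|\le a\}$. Since $0<e(\|x\|)\le\delta<1$, monotonicity of $t\mapsto\ln|\ln t|$ near $0$ yields $\ln|\ln e(\|x\|)|\ge\ln|\ln\delta|$, hence
\begin{equation*}
M_{\|x\|}f(x)\ \ge\ \frac{\ln|\ln e(\|x\|)|}{\|x\|^{d-1}}\ \ge\ \frac{\ln|\ln\delta|}{\|x\|^{d-1}}\ \ge\ \frac{\ln|\ln\delta|}{a^{d-1}}
\end{equation*}
for all $x$ with $1<\|x\|\le a$.

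Next I would set
\begin{equation*}
\lambda\ :=\ \frac{\ln|\ln\delta|}{a^{d-1}}.
\end{equation*}
By the previous display, the super-level set $\{x\in\R^d\mid M_{\|x\|}f(x)\ge\lambda\}$ contains the entire spherical shell $\{1\le\|x\|\le a\}$, whose Lebesgue volume equals $c_d(a^d-1)$ (where $c_d$ denotes the volume of the unit ball). Multiplying:
\begin{equation*}
\lambda^{d/(d-1)}\bigl|\{x\mid M_{\|x\|}f(x)\ge\lambda\}\bigr|\ \ge\ \frac{(\ln|\ln\delta|)^{d/(d-1)}}{a^{d}}\cdot c_d(a^d-1)\ =\ c_d(1-a^{-d})(\ln|\ln\delta|)^{d/(d-1)},
\end{equation*}
which is exactly the claimed inequality.

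There is no real obstacle: the proof is essentially a careful bookkeeping step that packages the pointwise estimate of Lemma~\ref{func-crit} as a weak-type distributional statement. The only subtlety worth flagging is the exponent matching — that $\lambda^{d/(d-1)}\cdot a^d$ combines cleanly with the volume $c_d(a^d-1)$ to produce the factor $c_d(1-a^{-d})$ on the right-hand side. This matching is precisely what dictates the choice $\lambda=(\ln|\ln\delta|)/a^{d-1}$, and it is also what makes this lemma a useful building block for defeating the weak-$L^{d/(d-1)}$ maximal inequality via a rescaling/superposition argument built on many such annular shells.
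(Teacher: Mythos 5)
Your proof is correct and follows essentially the same route as the paper: apply Lemma~\ref{func-crit} on the shell $1<\|x\|\le a$, use $e(\|x\|)\le\delta$ to replace $\ln|\ln e(\|x\|)|$ by $\ln|\ln\delta|$, choose $\lambda=\ln|\ln\delta|/a^{d-1}$ so the super-level set contains the whole shell of volume $c_d(a^d-1)$, and multiply. No issues.
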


\begin{proof}
From Lemma \ref{func-crit}, we deduce that, for all $\lambda>0$, the set of points $x$ such that $M_{\|x\|}f(x)\geq\lambda$ contains the annulus defined by $\|x\|\in(1,a]$ and $\displaystyle  \frac{\ln|\ln e(\|x\|)|}{\|x\|^{d-1}}\geq\lambda$. Hence this set contains the annulus defined by $\|x\|\in(1,a]$ and $\displaystyle  \frac{\ln|\ln\delta|}{\|x\|^{d-1}}\geq\lambda$.
\\
We choose $ \displaystyle\lambda=\frac{\ln|\ln\delta|}{a^{d-1}}$, so that the set of points $x$ such that $M_{\|x\|}f(x)\geq\lambda$ contains the annulus defined by $\|x\|\in(1,a]$.
We have 
$$
\left|\left\{x\in\R^d\mid M_{\|x\|}f(x)\geq\lambda\right\}\right|\geq c_d(a^d-1),
$$
and multiplying each term by $\lambda^{d/(d-1)}$, we obtain the inequality stated in Lemma \ref{cour}.
\end{proof}
\begin{proof}[End of the proof of Theorem \ref{main}]
Now suppose that Hypothesis (\ref{hyp}) is satisfied. Let $\delta\in(0,\frac14)$. We fix a number $h>0$ such that $e(r)\leq\delta r/a$ for all $r\in[h,ah]$. We consider the function $g$ defined by $g(x)=f(\frac1hx)$. The mean value of $g$ on the annulus of center $x$, radius $\|x\|$ and thickness $e(\|x\|)$ is the mean value of $f$ on the annulus of center $\frac1hx$, radius $\|x\|/h$ and thickness $e(\|x\|)/h$. We have $e(r)/h\leq\delta$ when $r/h\in[1,a]$. From Lemma \ref{cour}, we deduce that there exists $\lambda>0$ such that 
$$
\lambda^{d/(d-1)}\left|\left\{\frac1hx\mid M_{\|x\|}g(x)\geq\lambda\right\}\right|\geq c_d(1-a^{-d})(\ln|\ln\delta|)^{d/(d-1)}.
$$
Note also that $\int_{\R^d}g(x)^{d/(d-1)}\,\text{d}x=h^d\int_{\R^d}f(x)^{d/(d-1)}\,\text{d}x$. This last integral is a given constant denoted by $C$. We conclude that
$$
\lambda^{d/(d-1)}\left|\left\{x\mid M_{\|x\|}g(x)\geq\lambda\right\}\right|\geq \frac{c_d}{C}(1-a^{-d})(\ln|\ln\delta|)^{d/(d-1)}\int_{\R^d}g(x)^{d/(d-1)}\,\text{d}x.
$$
Since for arbitrarily small $\delta$ we can find such a function $g$, we conclude that the weak-$L^{d/(d-1)}$ is not satisfied, and the theorem is proved.
\end{proof}
\subsection{How to get rid of the dichotomy} The conclusion of the preceding study is that if the thickness function $e$ is regular enough, the critical exponent for the maximal inequality is either $d/(d-1)$ or 1. It is however possible to get rid of this dichotomy, by using results concerning spherical averages $A^S_r$ when the radius $r$ is only considered in a sparse subset of the real line. We will refer here to the article by Seeger, Wainger and Wright (\cite{sww}). 

Let $p_0$ be any fixed number in the interval $(1,\frac{d}{d-1})$.
Following \cite{sww}, there exists a sequence of positive numbers $(r_n)_{n\in\N}$ such that the family of average operators $\left(A^S_{r_n}\right)_{n\in\N}$ satisfies the strong-$L^p$ maximal inequality for all $p>p_0$ and does not satisfy the strong-$L^p$ maximal inequality for any $p<p_0$. 

Coming back to our setting, this suggests the following construction. Let us consider a sequence of positive reals $(\e_n)$ and define the thickness function $e$ by $e(r_n)=\e_n$, and $e(r)=r$ for the values of $r$ that do not appear in the sequence $(r_n)$. If the sequence $(\e_n)$ tends to zero rapidly enough, we will obtain that: for all $p>p_0$, the family $(A_r)_{r>0}$ satisfies the strong-$L^p$ maximal inequality; for all $p<p_0$, the family $(A_r)_{r>0}$ does not satisfy the strong-$L^p$ maximal inequality.

\section{Averages on annuli. Ergodic and differentiation theorems.}\label{convergence}

It is well known that a maximal inequality is necessary in order to have a differentiation theorem or a pointwise ergodic theorem for a family of averages. This fact is a consequence of the \emph{Banach principle}, described, for example in \cite{garsia} and \cite{etga}.

On the other hand, the differentiation property
$$
\lim_{r\to0}M_rf(x)=f(x) \quad\text{for a.e. $x$}
$$
is valid (for any $f\in L^p(\R^d)$) as soon as a $L^p$ maximal inequality is satisfied.

And writing averages on annuli as averages of averages on spheres, we see easily that the ergodic theorem
$$
\lim_{r\to+\infty}A_r\phi(\omega) \quad\text{exists for $\mu$-almost all $\omega$}
$$
(and equals the integral of $\phi$ when the action is ergodic)
is valid as soon as it is valid for spherical averages. 

Thus we have enough information in the preceding sections to conclude that in the case when $e(r)/r$ is bounded from below the differentiation theorem and the ergodic theorem for averages on annuli are satisfied under the sole integrability assumption, and in the case when $d\geq2$ and $e(r)/r$ tends to zero at zero (resp. at infinity) the  differentiation theorem (resp. the pointwise ergodic theorem) is valid under the integrability assumption $p>d/(d-1)$ and invalid under the integrability assumption $p\leq d/(d-1)$.

Let us be a little more precise on the meaning of the adjective ``invalid". Saying that the differentiation theorem is invalid for $p\leq d/(d-1)$ means that there exists a function $f$ on $\R^d$, which is locally of power $d/(d-1)$ integrable and for which $\lim_{r\to 0}M_rf$ does not exist in the almost everywhere sense (of course, it exists and equal $f$ in the $L^{d/(d-1)}_{loc}$ sense). Saying that the pointwise ergodic theorem is invalid for $p\leq d/(d-1)$ means that, for any aperiodic probability measure preserving dynamical system $\left(\Omega,\T,\mu,(T_t)_{t\in\R^d}\right)$, there exists $\phi\in L^{d/(d-1)}(\mu)$ such that the ergodic averages $A_r\phi$ are not almost everywhere convergent when $r$ tends to infinity. (Recall that they converge in the mean, by Theorem \ref{met}.)

\subsection{Remark on non-spherical annuli}
The facts that are presented in Sections \ref{principal} and \ref{convergence} concern spherical annuli. They can certainly be extended to other geometrical domains; indeed, it is known that the maximal inequality and differentiation theorem for spherical averages can be extended to averages on homotetic images of very general hypersurfaces, and there is no difficulty to define an associated notion of annuli. However, these hypersurfaces have to satisfy some curvature conditions in order, roughly speaking, to avoid any ``flat part''. These facts are very well described in Stein's book \cite{St2}, Section XI.3.

We will not enter in this level of generality in the present article, but we propose to state, as a typical example, that ``cubic annuli'' are in general bad for differentiation and ergodic theorems.

Mimicing the notation we used for spherical annuli, we define
$$
D_{r,e}:=\{x\in\R^d\mid r-e\leq\|x\|_\infty\leq r\},
$$
where $\|(x_1,x_2,\ldots,x_d)\|_\infty=\max_{1\leq i\leq d}|x_i|$.

Of course Hardy-Littlewood maximal inequality and Wiener ergodic theorem are valid for $\|\cdot\|_\infty$-``balls'' as well as they are valid for Euclidean balls. Thus, if we consider a thickness function $r\mapsto e(r)$ such that $e(r)/r$ stays bounded by below, we have an analogue of Theorem \ref{boule} and differentiation theorem, as well are pointwise ergodic theorem, are true for averages on $D_{r,e(r)}$ for the class of integrable functions.

If in contrast $\inf_{r>0}e(r)/r=0$  the situation is radically different. Using negative results on the maximal inequality that are known in dimension~1, it is possible to prove that if $e$ is non-decreasing and $\inf_{r>0}e(r)/r=0$ then the averages on $D_{r,e(r)}$ do not satisfy any $L^p$ maximal inequality. If $\lim_{r\to0}e(r)/r=0$, there is no hope to obtain a differentiation theorem for averages on cubic annuli, even in the class of bounded measurable functions $f$. If $\lim_{r\to+\infty}e(r)/r=0$, there is no hope to obtain a pointwise ergodic theorem for averages on cubic annuli, even in the class of bounded measurable functions $\phi$.

\section{Dimension 1}\label{dimun}

In dimension 1, our averages take the following form :
$$
M_rf(x)=\frac1{2e(r)}\left(\int_{-r}^{-r+e(r)}f(x+t)\,\text{d}t+\int_{r-e(r)}^{r}f(x+t)\,\text{d}t\right)
$$
and
$$
A_r\phi(\omega)=\frac1{2e(r)}\left(\int_{-r}^{-r+e(r)}\phi(T_t\omega)\,\text{d}t+\int_{r-e(r)}^{r}\phi(T_t\omega)\,\text{d}t\right).
$$

They are similar to ``moving ergodic averages'' studied in the discrete time case in \cite{Bellow}.

We have the following facts.
\begin{itemize}
\item[(i)]
The mean ergodic theorem (cf Theorem \ref{met}) is satisfied by the averages $(A_r)_{r>0}$ if and only if $\lim_{r\to\infty}e(r)=+\infty$.
\item[(ii)]
If $\inf_{r>0}e(r)/r>0$, then a weak-$L^1$ maximal inequality and a strong-$L^p$ maximal inequality are satisfied by these averages, for all $p\in(1,+\infty)$.
\item[(iii)]
If one of Hypothesis (h1), (h2) or (h3) of Theorem \ref{main} is satisfied, none of the preceding maximal inequality is satisfied.
\end{itemize}

The ``if'' part of fact (i) is a direct consequence of the computation of the Fourier transform of the kernel of $M_r$; it can also be verified easily that the F\o lner property is satisfied. The ``only if'' part of fact (i) can be verified by an explicit construction, using Rokhlin towers.

Fact (ii) follows directly from the classical Hardy-Littlewood maximal inequality in dimension 1.

Fact (iii) can be verified by using a cone condition similar to the condition described in \cite{Bellow}. We do not give here the details of this argument, which is described in the doctorat thesis of the first author (\cite{fh}).


\begin{thebibliography}{99}
\bibitem[TEAG]{etga} C. Anantharaman, J.-P. Anker, M. Babillot, A. Bonami, B. Demange, F. Havard, S. Grellier, P.~ Jaming, E. Lesigne, P. Maheux, J.-P. Otal, B. Schapira et J.-P. Schreiber. \textit{Th\'eor\`emes ergodiques pour les 
actions de groupes}, collective book accessible on line at http://www.fdpoisson.org/livre-TEAG.php
\bibitem[BeJR]{Bellow}  A. Bellow. R. Jones et J. Rosenblatt. 
Convergence for moving averages.  \textit{Ergodic Theory Dynam. Systems}  10  (1990),  no. 1, 43-62. 
\bibitem[Bo]{B} J. Bourgain. Averages in the plane over convex curves
      and maximal operators. \textit{J. Analyse Math.} 47 (1986), 69-85.
\bibitem[H]{fh} F. Havard. Moyennes ergodiques sur des domaines \`a sym\'etrie sph\'erique. \textit{Th\`ese de Doctorat, Universit\'e Fran\c{c}ois Rabelais - Tours, France}. 2008.
\bibitem[G]{garsia} A.M. Garsia. \textit{Topics in almost everywhere convergence}. Lectures in Advanced Mathematics, 4. Markham Pub.Co., Chicago. 1970.
\bibitem[J]{J} R. L. Jones. Ergodic averages on
      spheres. \textit{J. Analyse Math.} 61 (1993), 29-45.
\bibitem[K]{Krengel} U. Krengel. \textit{Ergodic Theorems}. de Gruyter Studies in Mathematics, 6. Walter de Gruyter and Co., Berlin, 1985.
\bibitem[L]{L} M. T. Lacey. Ergodic averages on
      circle. \textit{J. Analyse Math.}, 67 (1995), 199-206.
\bibitem[SeWaWr]{sww} A. Seeger, S. Wainger, and
      J. Wright. Pointwise convergence of spherical means.  
\textit{Math. Proc. Cambridge Philos. Soc.}  118  (1995),  no. 1, 115-124.
\bibitem[St1]{St1}
E. M. Stein. Maximal functions. I. Spherical means.  
\textit{Proc. Nat. Acad. Sci. U.S.A.}  73  (1976), no.~7, 2174--2175.
\bibitem[St2]{St2}
E. M. Stein. \textit{Harmonic analysis: real-variable methods,
orthogonality, and oscillatory integrals}. Princeton Mathematical
Series, 43. Monographs in Harmonic Analysis, III. Princeton University
Press, Princeton, NJ, 1993.
\bibitem[W]{w} N. Wiener. The ergodic theorem. \textit{Duke Math. J.}
      5 (1939), 1-18.
\end{thebibliography}
\end{document}